\documentclass[a4,11pt]{article}




\usepackage{paralist, amsmath, amsthm, amsfonts, amssymb, xy}
\usepackage[margin=1in]{geometry} 
\usepackage{hyperref}
\usepackage[T1]{fontenc}
\usepackage{calligra}

\input xy
\xyoption{all}

\newcommand{\NN}{\mathbb{N}}
\newcommand{\ZZ}{\mathbb{Z}}
\newcommand{\SM}{\mathbf{Sm}_K}

\newcommand{\srarrow}{\twoheadrightarrow}
\newcommand{\irarrow}{\hookrightarrow}

\newcommand{\flag}{\mathcal{F}\ell\,}
\newcommand{\Proj}{\mathbb{P}}
\newcommand{\Aff}{\mathbb{A}}
\newcommand{\spol}{\mathfrak{S}}
\newcommand{\gpol}{\mathfrak{G}}
\newcommand{\bpol}{\mathfrak{H}}

\newcommand{\trecd}{\cdot\cdot\cdot}
\newcommand{\tred}{\ldots}
\newcommand{\unddot}{_\textbf{\textbullet}}

\newcommand{\spec}{{\rm Spec\,}}

\newtheorem{theorem}{Theorem}[section]
\newtheorem{lemma}[theorem]{Lemma}
\newtheorem{proposition}[theorem]{Proposition}
\newtheorem{corollary}[theorem]{Corollary}

\theoremstyle{definition} \newtheorem{remark}[theorem]{Remark}
\theoremstyle{definition} \newtheorem{example}[theorem]{Example}
\theoremstyle{definition} 
\theoremstyle{definition} 
\date{}

\title{\textbf{On the $K$-theoretic fundamental class of Deligne--Lusztig varieties}}
\author{THOMAS  HUDSON, DENNIS PETERS}

\begin{document}
\maketitle

\begin{abstract}
In this paper we express the class of the structure sheaves of the closures of Deligne--Lusztig varieties as explicit double Grothendieck polynomials in the first Chern classes of appropriate line bundles on the ambient flag variety. This is achieved by viewing such closures as degeneracy loci of morphisms of vector bundles.

\noindent \textit{Keywords:} Deligne-Lusztig varieties, K-theory, Grothendieck polynomials, Degeneracy loci. 

\noindent \textit{2010 MCS:} Primary: 20G40, 19E08; Secondary: 14M15.
\end{abstract}

\section{Introduction}
The goal of this paper is to compute, through the use of universal polynomials, the fundamental classes of the closures of Deligne--Lusztig varieties in $K^0(Fl_n)$, the Grothendieck ring of vector bundles. These locally closed subvarieties of the flag varieties defined over fields of positive characteristic were introduced in \cite{RepresentationDeligne} by Deligne and Lusztig and play a fundamental role in the representation theory of finite groups of Lie type \cite{CharactersLusztig}. Recently, in \cite{HomologyKim}, Kim gave a formula which expresses their Chow ring fundamental class in terms of Schubert classes and, in the special case of flag varieties of type A, he was able to rewrite this expression using double Schubert polynomials.

 These universal polynomials in two sets of variables  indexed by the symmetric group $\{\mathfrak{S}_w(\mathbf{x},\mathbf{y})\}_{w\in S_\infty}$ were introduced by Lascoux and Sch\"utzenberger in \cite{LascouxSchutzenberger,ClassesLascoux}. Later, in \cite{FlagsFulton}, Fulton used them to describe the fundamental classes of the degeneracy loci of morphisms of vector bundles in the Chow ring $CH^*$ .      
   This result turned out to have analogues in $K^0$ (due to Buch \cite{GrothendieckBuch}) and, in characteristic 0, connective $K$-theory $CK^*$ \cite{ThomHudson}. The latter functor, originally introduced by Levine and Morel \cite{LevineMorel}, is a refinement of the other two.
 The articles \cite{GrothendieckBuch} and \cite{ThomHudson} respectively describe the degeneracy loci in $K^0$ and $CK^*$ through the double Grothendieck polynomials $\{\mathfrak{G}_{w}(\mathbf{x},\mathbf{y})\}_{w\in S_\infty}$ of Lascoux--Sch\"utzenberger  \cite{Lascoux1990,HopfLascoux} and the double $\beta$-polynomials $\{\mathfrak{H}^{(\beta)}_{w}(\mathbf{x},\mathbf{y})\}_{w\in S_\infty}$ of Fomin--Kirillov~\cite{GrothendieckFomin}.
   
 Given this state of affairs, it seems natural to wonder whether Kim's result can be interpreted within the framework of degeneracy loci so that it generalises to both $K^0$ and $CK^*$. This is indeed the case.
 
 \begin{theorem}
 Let $Fl_n$ denote the full flag variety of quotient flags of $\Aff_K^n$ and $Q_{n-1}\srarrow\trecd \srarrow Q_1$ be its associated universal flag of quotient bundles. Here $K$ is an algebraic closure of the field $\mathbb{F}_q$. Set $M_i:=Ker(Q_i\srarrow Q_{i-1})$. Then, for every permutation $w\in S_n$ the fundamental class of the closure of the Deligne--Lusztig variety $X(w)$, as an element of $CK^*(Fl_n)$, is given by
 $$\left[\overline{X(w)}\right]_{CK}=\bpol^{(-\beta)}_{ww_0}\Big(q\odot c_1(M_i), c_1(M_{n+1-j}^\vee)\Big)$$
 for $i,j\in\{1,\tred,n\}$. Here $\bpol^{(\beta)}$ stands for the double $\beta$-polynomial of Fomin--Kirillov and the formal multiplication $\odot$ is given by
 $$q\odot x=\sum_{i=1}^q \binom{q}{i}x^i(-\beta)^{i-1},$$
where $\beta\in CK^{-1}(\spec K)$.  
 By respectively setting $\beta$ equal to 0 and 1, one obtains analogous formulas for the Chow ring and for the Grothendieck ring of vector bundles:  
 $$i)\ \left[\overline{X(w)}\right]_{CH}=\spol_{ww_0}\Big(q\cdot c_1(M_i),c_1(M_{n+1-j})\Big) ;\ ii)\  
 \left[\mathcal{O}_{\overline{X(w)}}\right]_{K^0}=\gpol_{ww_0}\Big(1-[M_i^\vee]^q,1-[M_{n+1-j}]\Big).$$
 \end{theorem}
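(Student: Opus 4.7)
The strategy is to realise $\overline{X(w)}$ as a degeneracy locus of morphisms of vector bundles on $Fl_n$, and then feed this description into the connective $K$-theoretic degeneracy-locus formula of \cite{ThomHudson}. A point $F_\bullet\in Fl_n$ lies in $X(w)$ precisely when $(F_\bullet,F_\bullet^{(q)})$ is in relative position $w$, where $F_\bullet^{(q)}$ is the geometric Frobenius twist; passing to closures replaces this by the Bruhat inequality ``position $\leq w$''. As in the theory of Schubert varieties, this inequality is encoded by rank bounds on the composites $F_i\hookrightarrow \Aff_K^n\twoheadrightarrow \Aff_K^n/F_j^{(q)}$, and these globalise to a rank condition on the morphism $Q_\bullet\to \Phi^*Q_\bullet$ of universal quotient flags over $Fl_n$. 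The first task is therefore to check that the resulting degeneracy locus, indexed in the standard way by $ww_0$, coincides scheme-theoretically with $\overline{X(w)}$, with the correct codimension and the Cohen--Macaulay property required to apply \cite{ThomHudson}. This is essentially the translation into the language of degeneracy loci of Kim's observation in \cite{HomologyKim}, suitably rephrased so as to be valid at the level of bundles rather than cycles.

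Once this identification is in place, \cite{ThomHudson} supplies a formula of the shape
$$\left[\overline{X(w)}\right]_{CK}=\bpol^{(-\beta)}_{ww_0}\Big(c_1\big(\Phi^*M_i\big),\,c_1\big(M_{n+1-j}^\vee\big)\Big),$$
the arguments on the left entering from the Frobenius-twisted flag and those on the right from the original one, with the usual dualisation dictated by the conventions of degeneracy loci. The remaining input is purely formal: since $\Phi^*$ acts on a line bundle $L$ as $L\mapsto L^{\otimes q}$, the multiplicative formal group law $x+_F y=x+y-\beta xy$ governing $CK^*$ gives $c_1(L^{\otimes q})=[q]_F(c_1(L))$, and expanding by the binomial theorem rewrites this precisely as $q\odot c_1(L)=\sum_{i=1}^{q}\binom{q}{i}c_1(L)^i(-\beta)^{i-1}$. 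Substituting produces the stated closed form.

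The specialisations follow from the naturality of the realisations $CK^*\to CH^*$ at $\beta=0$ (under which $\bpol^{(0)}_w$ becomes $\spol_w$ and $q\odot x$ becomes $qx$) and $CK^*\to K^0$ at $\beta=1$ (under which $\bpol^{(-1)}_w$ becomes $\gpol_w$ and $c_1(L)$ becomes $1-[L^\vee]$, so that $q\odot c_1(M_i)$ becomes $1-[M_i^\vee]^q$). The main obstacle of the whole argument is therefore the first step: carefully matching conventions---quotient versus sub flags, position versus inverse position, the appearance of $w_0$, the placement of duals, and the genericity of the morphism $Q_\bullet\to \Phi^*Q_\bullet$---so that $\overline{X(w)}$ is displayed as exactly the degeneracy locus to which \cite{ThomHudson} applies. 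Once the geometry is correctly set up, the Chern-class computation and the specialisations are essentially automatic.
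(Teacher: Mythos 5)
Your outline follows the same overall strategy as the paper --- realise $\overline{X(w)}$ as a degeneracy locus, invoke a connective $K$-theoretic degeneracy-locus/Schubert-class formula in double $\beta$-polynomials, convert $c_1(M_i^{\otimes q})$ into $q\odot c_1(M_i)$ via the formal group law, and specialise at $\beta=0,1$ --- but as written it has a genuine gap: the formula you quote from \cite{ThomHudson} is a characteristic-zero result, proved for Levine--Morel's connective $K$-theory, whose construction and the arguments of \emph{loc.\ cit.} rely on resolution of singularities; here everything lives over $K=\overline{\mathbb{F}_q}$, which is the only place Deligne--Lusztig varieties exist. Extending that formula to positive characteristic is exactly what Proposition \ref{prop Schubert} is for: one works with the Cai--Anderson construction of $CK^*$ \cite{AlgebraicCai,K-theoreticAnderson}, establishes the presentation of $CK^*(\flag V)$ via the projective bundle formula, identifies the push-pull operators $\pi_i^*\pi_{i*}$ with the $\beta$-divided difference operators of Fomin--Kirillov, and then runs the Bott--Samelson induction, using that Schubert varieties have rational singularities so that $\varphi_{I*}[R_I]_{CK}=[\Omega_w]_{CK}$. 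Some version of this work cannot be skipped; a bare citation of \cite{ThomHudson} does not cover the case at hand.

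The second issue is that the step you yourself single out as the main obstacle --- exhibiting $\overline{X(w)}$ scheme-theoretically as an expected-codimension degeneracy locus on $Fl_n$ to which the formula applies --- is announced but never carried out (and note that there is no natural morphism $Q_\bullet\to F_{rel}^*Q_\bullet$: the rank conditions compare the subbundle flag $U_\bullet$ with the twisted quotient flag $F_{rel}^*Q_\bullet$ through the identity of $\underline{\Aff_K^n}$). The paper gets this more cheaply: since $X(w)=\Gamma_{F_{rel}}^{-1}(O(w))$ by definition and both $\overline{X(w)}$ and $\overline{O(w)}$ are unions of the corresponding strata over $v\le w$ in Bruhat order, one has $\Gamma_{F_{rel}}^{-1}(\overline{O(w)})=\overline{X(w)}$, of the same codimension as $\overline{O(w)}$; viewing $Fl_n\times Fl_n\stackrel{pr_1}\to Fl_n$ as the flag bundle $\flag \underline{\Aff_K^n}$ as in Example \ref{ex Orbits}, $\overline{O(w)}$ is the Schubert variety $\Omega_{ww_0}$ and $\Gamma_{F_{rel}}^*\mathcal{Q}_\bullet=F_{rel}^*Q_\bullet$, so $[\overline{X(w)}]_{CK}=\Gamma_{F_{rel}}^*[\Omega_{ww_0}]_{CK}$, and Proposition \ref{prop Schubert} together with Lemma \ref{lem qpow} ($F_{rel}^*L\simeq L^{\otimes q}$) and Lemma \ref{lem multiplication} finishes the computation. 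If you insist on working directly on $Fl_n$, the expected-codimension statement is on you to prove, and the graph/Bruhat-order argument above is the natural way to do it. Your formal-group-law step and the specialisations are fine, up to the sign bookkeeping $\bpol^{(0)}_w(\mathbf{x},\mathbf{y})=\spol_w(\mathbf{x},-\mathbf{y})$, which is what makes the second argument come out as $c_1(M_{n+1-j})$ rather than its negative in formula \emph{i)}.
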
   







While \textit{i)} recovers Kim's formula, \textit{ii)} appears to be new.  It is worth stressing that our method has the advantage of highlighting the geometric picture and it does not rely on \cite{HomologyKim}. We expect to be able to apply it to the Deligne--Lusztig varieties of the other classical groups as well. In these cases the formulas should involve Grothendieck analogues of the double Schubert polynomials of Ikeda--Mihalcea--Naruse \cite{IkedaMihalceaNaruse}. 

The paper is structured as follows. Section 2 is devoted to the definition of Deligne--Lusztig varieties, which we then relate to degeneracy loci. In section 3 we provide a quick review of connective $K$-theory and prove two statements that will be needed in section 4 for the proof of the main result. 

\vspace{-0.35cm}
 
\paragraph{Acknowledgements:} This research was conducted in the framework of the research training group
\emph{GRK 2240: Algebro-Geometric Methods in Algebra, Arithmetic and Topology},
which is funded by the DFG. Both authors would like to thank Ulrich Goertz for his helpful comments on an earlier version of this work. 

\vspace{-0.35cm}

\paragraph{Notations and conventions:} 
Throughout this paper $k$ will represent the field $\mathbb{F}_q$, where $q=p^m$ for some prime number $p\in\NN$ and some exponent $m\in \NN$, while $K$ will denote its algebraic closure.
We will denote by $\SM$ the category of smooth schemes over $\spec K$.

\section{Recollections on Deligne--Lusztig varieties and degeneracy loci}

\subsection{Deligne--Lusztig varieties}
Let us begin by recalling the notion of Frobenius endomorphism. For a scheme $X$ defined over $\spec k$ the absolute Frobenius, denoted $F:X\rightarrow X$, is defined in such a way that its associated morphism of topological spaces is just the identity and the map between the structure sheaves raises every section to the $q$-th power. If we consider the base change of $F$ to the algebraic closure, we obtain the relative Frobenius $F_{rel}:\overline{X}\rightarrow \overline{X}$. 
Let us begin with the following elementary lemma.
\begin{lemma}\label{lem qpow}
Let $L$ be a line bundle defined over the $k$-scheme $X$ and denote by $\overline{L}$ and $\overline{X}$ the schemes obtained by base change to $K$. Then one has
$$F_{rel}^* \overline{L}\simeq \overline{L}^{\otimes q}.$$    
\end{lemma}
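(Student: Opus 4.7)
My approach is to reduce the claim for the relative Frobenius to the analogous statement for the absolute Frobenius, and then verify the latter by a transition-function calculation.

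First, I would exploit the fact that $F_{rel}$ is by definition the base change of the absolute Frobenius $F\colon X\to X$ along $\spec K\to\spec k$. This is well-posed because $k=\mathbb{F}_q$, so the $q$-th power fixes every element of $k$ and $F$ is automatically a $k$-morphism. Denoting by $p\colon\overline{X}\to X$ the projection, the equality $F_{rel}=F\times_k\mathrm{id}_K$ yields the commutative square $p\circ F_{rel}=F\circ p$, and hence
$$F_{rel}^*\,\overline{L}\;=\;F_{rel}^*p^*L\;\simeq\;p^*F^*L\;=\;\overline{F^*L}.$$
It therefore suffices to prove the absolute version $F^*L\simeq L^{\otimes q}$ on $X$.

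Second, this is the standard statement that the absolute Frobenius acts as multiplication by $q$ on the Picard group, which I would verify concretely. Pick a trivialising open cover $\{U_i\}$ of $X$ for $L$, with transition cocycle $\{g_{ij}\}\in H^1(X,\mathcal{O}_X^\times)$. Since $F$ is the identity on the underlying topological space and the sheaf map $F^\#$ is the $q$-th power on sections, the pullback $F^*L$ is represented by the cocycle $\{F^\#(g_{ij})\}=\{g_{ij}^q\}$, which is precisely the cocycle of $L^{\otimes q}$. Hence $F^*L\simeq L^{\otimes q}$.

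Finally, since base change is compatible with tensor products, $\overline{L^{\otimes q}}\simeq\overline{L}^{\otimes q}$, and concatenating the three isomorphisms gives the desired $F_{rel}^*\,\overline{L}\simeq\overline{L}^{\otimes q}$. No step here is genuinely hard; the only delicate point is keeping track of absolute versus relative Frobenius and ensuring that pullback commutes correctly with the base change $\spec K\to\spec k$.
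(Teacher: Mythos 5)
Your argument is correct and follows essentially the same route as the paper: factor $F_{rel}$ through the commutative square $pr_2\circ F_{rel}=F\circ pr_2$, reduce to the absolute statement $F^*L\simeq L^{\otimes q}$, and use compatibility of pullback with tensor products. The only difference is that you verify the standard fact $F^*L\simeq L^{\otimes q}$ by an explicit cocycle computation, whereas the paper simply cites it as known.
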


\begin{proof}
If we consider $\spec K \times_{\spec k}X=:\overline{X}\stackrel{pr_2}\longrightarrow X$, the morphism arising from the base change to $K$, then we have the following identifications.
$$F_{rel}^*\overline{L}=F_{rel}^*(pr_2^* L)= (pr_2\circ F_{rel})^*L=(F\circ pr_2)^*L=pr_2^*(F^*L)\simeq pr_2^*(L^{\otimes q})= (pr_2^* L)^{\otimes q}= \overline{L}^{\otimes q}$$
They follow from the functoriality of pullbacks of bundles and the known fact that pulling back a line bundle along the absolute Frobenius morphism raises it to the $q$-th tensor power.  
\end{proof}

Our interest in $F_{rel}$ is due to the role it plays in the definition of  \textit{Deligne--Lusztig varieties}, a family of locally closed subsets of flag varieties. More precisely, for every positive integer $n$ we consider the variety $Fl_n$, which parametrises the full flags $0\subset U_1 \subset \trecd \subset U_{n-1}\subset\mathbb{A}_{K}^n$ of the $n$-th affine space, where $U_i$ is a vector space of dimension $i$. Please notice that both the affine space and the flag variety will be viewed as schemes over the algebraically closed field $K$.

 In view of the identification between $Fl_n$ and $\mathcal{B}$, the set of all Borel subgroups of $GL_n(K)$, we can subdivide $Fl_n\times Fl_n$ by making use of the Bruhat decomposition of $\mathcal{B}\times \mathcal{B}$. Conjugation by elements of $GL_n(K)$ defines an action on $\mathcal{B}$ and the orbits of the corresponding diagonal action on $\mathcal{B}\times \mathcal{B}$ are indexed by the symmetric group $S_n$, the Weyl group of $GL_n(K)$. In other words, to every $w\in S_n\subseteq GL_n(K)$ we associate $O(w)$, the orbit of $(B, wBw^{-1})$, and one says that two Borel subgroups $B$ and $B'$ are in relative position $w$ whenever $(B,B')\in O(w)$. As no confusion can arise, $O(w)$ will also denote the corresponding orbit inside of $Fl_n\times Fl_n$.  

Let us now consider $\Gamma_{F_{rel}}:Fl_n\rightarrow Fl_n\times Fl_n$, the graph morphism of $F_{rel}$. For every $w\in S_n$, we define the Deligne--Lusztig variety associated to $w$ by setting 
$$X(w):=\Gamma_{F_{rel}}^{-1}(O(w)).$$

\subsection{Degeneracy loci}
We now recall some basic facts concerning degeneracy loci of maps of vector bundles.
Let $X\in \SM$ be a smooth scheme over which is given a morphism $h:E\rightarrow F$ of vector bundles of respective ranks $e$ and $f$. For every integer choice of $0\leq r\leq \min\{e,f\}$, we can construct the degeneracy locus
$$D_r(h):=\left\{x\in X\ |\ \text{rank}\Big(h(x):E(x)\rightarrow F(x)\Big)\leq r\right\}.$$
Its scheme structure is given by regarding it as the zero scheme $Z(\wedge^{r+1}h)$, where $\wedge^{r+1}h$ is interpreted as a section of the bundle $\text{Hom}(\wedge^{r+1}E,\wedge^{r+1}F)$.
We will also consider the following open subset

$$D^\circ_r(h):=\left\{x\in X\ |\ \text{rank}\Big(h(x):E(x)\rightarrow F(x)\Big)= r\right\}=D_r(h)\setminus D_{r-1}(h).$$
 
Both constructions can be generalised to the case of bundles with flags. Assume that $E$ is endowed with a full flag of subbundles $E_\bullet=(E_1\irarrow\trecd E_{e-1}\irarrow E)$ and, similarly, that $F$ comes equipped wih a full flag of quotient bundles $F_\bullet=(F\srarrow F_{f-1}\srarrow \trecd \srarrow F_1)$. Then, to every function $\mathbf{r}:\{1,\tred,f\}\times\{1,\tred, e\}\rightarrow \NN$, we can associate the subscheme
$$\Omega_\mathbf{r}(E_\bullet,F_\bullet, h):= \bigcap_{i,j}D_{\mathbf{r}(j,i)}(h_{i,j}), $$
where $h_{i,j}$ stands for the composition $E_i\irarrow E\rightarrow F \srarrow F_j$. In a similar fashion we can also define $\Omega^\circ_\mathbf{r}(E_\bullet,F_\bullet, h)$.   

We will now consider some important examples, in which we will always set $h=id$. 

\begin{example}
Let $X$ be the flag variety $Fl_n$ associated to the affine space $\mathbb{A}_K^n$ and let $U_\bullet$ be the tautological flag of subbundles of $\underline{\mathbb{A}_K^n}:=Fl_n\times \mathbb{A}_K^n$. Every point $x\in Fl_n$ represents a full flag of vector spaces inside $\mathbb{A}^n_K$ obtained by considering the following restrictions 
$$U_1(x)\irarrow \trecd \irarrow U_{n-1}(x)\irarrow \mathbb{A}_K^n=\underline{\mathbb{A}_K^n}(x).$$
Let us denote this flag by $U_\bullet$, by $Q_\bullet$ the flag of quotient bundles $\underline{\mathbb{A}_K^n}/U_\bullet$ and by $\underline{\mathbb{A}_K^\bullet}$ the flag of trivial subbundles associated to a chosen point $\tilde{x}\in Fl_n$.  
For every permutation $w\in S_n$ one considers the function $\mathbf{r}_w:\{1,\tred, n\}^2\rightarrow \NN$ given by 
$$\mathbf{r}_w(j,i):=\{l\leq j\ |\ w(l)\leq i\}.$$
In this particular setting the degeneracy loci
$\Omega_w:=\Omega_{\mathbf{r}_w}(\underline{\mathbb{A}_K^\bullet},Q_\bullet,id_{Fl_n})$ turn out to be reduced and recover the Schubert varieties, while $\Omega^\circ_w:=\Omega^\circ_{\mathbf{r}_w}(\underline{\mathbb{A}_K^\bullet},Q_\bullet,id_{Fl_n})$ become the Schubert cells (for details see \cite[Lemma 6.1]{FlagsFulton}). It is worth pointing out that with this definition one has $l(w)=\text{codim}_K(\Omega_w,Fl_n)$, where the length function $l$ counts the number of inversions of the permutation $w$. To be more specific, the comparison with the notations used in \cite[Sections 2.2, 2.3]{FultonPragacz} is given by $\Omega_w=X_{ww_0}=Y_{w_0ww_0}.$ 
\end{example}

\begin{example}
The previous example can be generalised as follows. Let $V\rightarrow X$ be a vector bundle of rank $n$ over a smooth base and $V_\bullet$ a full flag of subbundles. As for the flag variety, the associated flag bundle $\pi:\flag V\rightarrow X$ comes equipped with the tautological flag $\mathcal{U}_\bullet$ of subbundles of $\pi^* V$ and with the quotient flag $\mathcal{Q}_\bullet$. The generalisation of Schubert varieties and Schubert cells is then obtained by setting
$\Omega_w:=\Omega_{\mathbf{r}_w}(\pi^* V_\bullet,\mathcal{Q}_\bullet,id_{\flag V})$ and $\Omega^\circ_w:=\Omega^\circ_{\mathbf{r}_w}(\pi^* V_\bullet,\mathcal{Q}_\bullet,id_{\flag V})$.  
\end{example}

\begin{example}\label{ex Orbits}
Let us consider a special case of the previous example. Take $\underline{\mathbb{A}_K^n}\rightarrow Fl_n$ as the given vector bundle $V\rightarrow X$ and $U_\bullet$ as the reference flag $V_\bullet$. In this case $\flag V$ is given by $Fl_n\times Fl_n \stackrel{pr_1}\rightarrow Fl_n$ and it is easy to check that the Bruhat decomposition can be described in terms of Schubert cells. More precisely, one has 
$O(w)=\Omega^\circ_{\mathbf{r}_{ww_0}}(pr_1^* U_\bullet,\mathcal{Q}_\bullet,id_{\flag V})$.
\end{example}

\section{Connective $K$-theory}
The goal of this section is to provide a brief overview of connective $K$-theory and extend to positive characteristic a result of \cite{ThomHudson} which describes the fundamental classes of the Schubert varieties of flag bundles.

Connective $K$-theory, denoted $CK^*:\SM^{op}\rightarrow \mathbf{R}^*$, is a contravariant functor from the category of smooth schemes to graded rings. It refines the Chow ring $CH^*$ and the Grothendieck group of vector bundles $K^0$. Through the years several alternative definitions of $CK^*$ have been proposed. The first, which requires the base field $k$ to satisfy resolution of singularities, is due to Levine--Morel \cite{LevineMorel} who defined it by using algebraic cobordism as the universal oriented cohomology theory with multiplicative formal group law. In \cite{AlgebraicCai} Cai proposed another definition, based on the Gersten complex, which can be used in every characteristic. It is worth mentioning that his theory is actually bigraded, but it contains $CK^*$ as its geometric part. Later, in \cite{ConnectiveDai}, Dai--Levine proposed yet another construction of $CK^*$ for schemes over perfect fields, in the context of motivic homotopy theory. Finally, Anderson modified Cai's definition to build a refined oriented Borel--Moore functor which returns $CK^*$ as its associated operational cohomology theory. This approach was introduced by Anderson in \cite[Appendix A]{K-theoreticAnderson} to describe fundamental classes of degeneracy loci and as a consequence it is the most suited to our needs. 

We will now illustrate the main features of connective $K$-theory. Although $CK^*$ is a contravariant functor it also admits push-forward morphisms $g_*$ for proper maps, exactly as $CH^*$ and $K^0$. These satisfy some expected properties of functorial nature and are compatible with pull-back morphisms $f^*$ through a base change formula whenever $f$ and $g$ are transverse. By combining these two operations one is able to define the first Chern class operator associated to a line bundle $L\rightarrow X$. If $s$ denotes the zero section, then one sets $\widetilde{c_1}(L):=s^*s_*:CK^*(X)\rightarrow CK^{*-1}(X)$ with the first Chern class $c_1(L)$ being the evaluation of this operator on the fundamental class $[X]_{CK}:=1_{CK^*(X)}$. Since $CK^*$ satisfies the projective bundle formula, it is possible to use Grothendieck's method to obtain Chern classes for arbitrary vector bundles. These satisfy the same formal properties of their counterparts in $CH^*$ (\textit{e.g.} the Whitney sum formula and various compatibilities with $f^*$ and $g_*$) with one important exception: it is no longer true that $c_1$ is linear with respect to the tensor product of line bundles. Instead, one has
$$c_1(L\otimes M)=c_1(L)\oplus c_1(M):=c_1(L)+c_1(M)-\beta c_1(L)c_1(M),$$
where $\beta\in CK^{-1}(\spec\,k)$ is identified with the push-forward of the fundamental class of $\Proj^1$ to the point. Actually, as pointed out in \cite[Appendix A.2]{K-theoreticAnderson}, the coefficient ring of $CK^*$ is isomorphic to $\ZZ[\beta]$. Notice that our sign convention for $\beta$ agrees with that of \cite{LevineMorel}, while it is opposite to that of \cite{K-theoreticAnderson}. The class $\beta$ also plays a central role in relating $CK^*$ with $CH^*$ and $K^0$. In fact, setting it equal to 0 allows one to recover the Chow ring, while making it invertible returns the Grothendieck ring. To be more precise one has functorial isomorphisms 
$$CK^*(X)/(\beta)\simeq CH^*(X)\quad \text{ and }\quad (CK^*(X))[\beta^{-1}]\simeq K^0(X)\otimes_\ZZ\ZZ[\beta,\beta^{-1}].$$           

We finish this section with two results that will be used in the main proof. For the first, let us notice that in the language of \cite{LevineMorel}, the operation $\oplus$ should be viewed as the formal group law associated to $CK^*$. Its formal inverse is then given by
$$\ominus x:=-\frac{x}{1-\beta x},$$
so that $(\ominus x) \oplus x=0$. In a similar spirit, one can define a formal multiplication $n\odot x$ by formally adding $n$ times the same element $x$. Since for every line bundle $L$ one has 
\begin{eqnarray*}
c_1(L^{\otimes n})=n\odot c_1(L), 
\end{eqnarray*}
 the following formula will allow us to express the first Chern class of tensor powers of line bundles. 

\begin{lemma}\label{lem multiplication}
For every $n\in \NN$ one has 
$$n\odot x=\sum_{i=1}^n \binom{n}{i}x^i(-\beta)^{i-1}.$$
\end{lemma}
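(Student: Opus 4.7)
The plan is to argue by induction on $n$, the natural move given that $n\odot x$ is defined as iterated formal addition with respect to the group law $x\oplus y=x+y-\beta xy$. The base case $n=1$ is immediate since the right-hand side reduces to $x$. For the inductive step I would write $(n+1)\odot x=(n\odot x)\oplus x=(n\odot x)+x-\beta x(n\odot x)$, substitute the inductive hypothesis $S_n:=\sum_{i=1}^n\binom{n}{i}x^i(-\beta)^{i-1}$, and observe that the term $-\beta x S_n$ reindexes to $\sum_{i=2}^{n+1}\binom{n}{i-1}x^i(-\beta)^{i-1}$. Collecting coefficients of $x^i(-\beta)^{i-1}$ and using Pascal's identity $\binom{n}{i}+\binom{n}{i-1}=\binom{n+1}{i}$ (together with $\binom{n}{n}=\binom{n+1}{n+1}$ for the top term and the trivial verification for $i=1$) yields the required expression with $n$ replaced by $n+1$.

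A cleaner, non-inductive alternative that I would probably present in parallel exploits the linearisation of $\oplus$: a direct computation shows that
$$1-\beta(x\oplus y)=(1-\beta x)(1-\beta y),$$
so the substitution $x\mapsto 1-\beta x$ turns $\oplus$ into ordinary multiplication. Consequently
$$1-\beta(n\odot x)=(1-\beta x)^n,$$
and solving for $n\odot x$ by the binomial theorem gives
$$n\odot x=\frac{1-(1-\beta x)^n}{\beta}=\sum_{i=1}^n\binom{n}{i}x^i(-\beta)^{i-1},$$
where the $1/\beta$ is harmless because every term in the numerator is divisible by $\beta$.

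There is no real obstacle here; the only slightly delicate point is keeping the signs straight when collapsing $(-1)^{i+1}\beta^{i-1}$ into $(-\beta)^{i-1}$ (or, in the inductive version, matching the Pascal identity against the two reindexed sums). Either route is a few lines, and I would pick the induction for self-containedness and mention the multiplicative linearisation as a remark, since it also clarifies why the formula has this particular shape and foreshadows the multiplicative behaviour of $1-\beta c_1(L)$ under tensor product.
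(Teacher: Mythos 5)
Your inductive argument is precisely the paper's proof: the same recursion $(n+1)\odot x=x\oplus(n\odot x)$, the same reindexing of the $-\beta x\,(n\odot x)$ term, and the same appeal to Pascal's identity, so that part is correct and matches the paper. The alternative you sketch is also correct and is a genuinely different route from the one the paper takes: the identity $1-\beta(x\oplus y)=(1-\beta x)(1-\beta y)$ shows that $x\mapsto 1-\beta x$ turns $\oplus$ into ordinary multiplication, hence $1-\beta(n\odot x)=(1-\beta x)^n$ by an immediate induction, and expanding with the binomial theorem gives the stated formula; the division by $\beta$ is harmless because $1-(1-\beta x)^n$ lies in the ideal generated by $\beta x$ in the domain $\ZZ[\beta][x]$, so the quotient is a well-defined polynomial. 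What the multiplicative route buys is conceptual rather than computational: it exhibits $1-\beta c_1(L)$ as the quantity that is multiplicative under tensor products of line bundles (for $\beta=1$ in $K^0$ it is just $[L^\vee]$), which explains why the coefficients are binomial and connects this lemma naturally with Lemma~\ref{lem qpow} and the appearance of $[M_i^\vee]^q$ in Corollary~\ref{cor main}; the paper's induction is marginally shorter and entirely self-contained. Either version is complete and correct.
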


\begin{proof}
The proof is by induction and the statement holds trivially for $n=0,1$. For the induction step we have
\begin{eqnarray*}
(n+1)\odot x&=& x\oplus (n\odot x)=x+(n\odot x)-\beta x (n\odot x)\\
&=& x+\sum_{i=1}^n \binom{n}{i}x^i(-\beta)^{i-1}+\sum_{j=1}^n \binom{n}{j}x^{j+1}(-\beta)^{j}\\
&=& (n+1)x+\sum_{i=2}^n \binom{n}{i}x^i(-\beta)^{i-1}+\sum_{i=2}^n \binom{n}{i-1}x^{i}(-\beta)^{i-1}+x^{n+1}(-\beta)^n\\
&=& (n+1)x+\sum_{i=2}^n \binom{n+1}{i}x^i(-\beta)^{i-1}+x^{n+1}(-\beta)^n=\sum_{i=1}^{n+1} \binom{n+1}{i}x^i(-\beta)^{i-1}.\qedhere
\end{eqnarray*} 
\end{proof}

The following result, which is the positive characteristic counterpart of \cite[Proposition 4.11]{ThomHudson}, expresses the fundamental classes of the Schubert varieties of a full flag bundle in terms of the double $\beta$-polynomials of Fomin--Kirillov \cite{GrothendieckFomin}. These are polynomials in $2n$ variables with coefficients in $\ZZ[\beta]$ which unify the double Schubert and Grothendieck polynomials of Lascoux--Sch\"utzenberger. 

\begin{proposition}\label{prop Schubert}
 Let $V$ be a rank $n$ vector bundle endowed with a full flag of subbundles $V\unddot$ over the smooth scheme $X$. Consider the associated full flag bundle $\pi:\flag V\rightarrow X$ and $Q\unddot$, its universal flag of quotient bundles. For every $w\in S_n$, the fundamental class of the Schubert variety $\Omega_w$ is given by  
$$[\Omega_w]_{CK}=\bpol^{(-\beta)}_{w}\Big(c_1(M_i), c_1\big(\pi^*(L_j^\vee)\big)\Big),$$
where $\bpol^{(-\beta)}_w$ stands for the double $\beta$-polynomial associated to $w$ and we set $L_i:=V_i/V_{i-1}$ and $M_i:=\text{Ker\,}\,(Q_i\rightarrow Q_{i-1})$.
\end{proposition}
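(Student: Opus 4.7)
My plan is to mirror the proof of \cite[Proposition 4.11]{ThomHudson}, replacing its Levine--Morel construction of connective $K$-theory -- which requires resolution of singularities and hence characteristic zero -- by Anderson's operational construction of $CK^*$, which is valid in all characteristics. Once the foundational tools (projective bundle formula, compatibility of pull-back and push-forward under transverse squares, and computation of tautological first Chern classes on projective bundles) are transported to Anderson's setting, the argument becomes essentially formal.

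The first step is to recognise $\Omega_w$ as the degeneracy locus $\Omega_{\mathbf{r}_w}(\pi^* V\unddot, \mathcal{Q}\unddot, \mathrm{id}_{\flag V})$ from Section~2.2, so that the proposition fits into the framework of double $\beta$-polynomial formulas for degeneracy loci.

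Next I would proceed by induction on $\ell(w)$. For the base case $w = w_0$, the subvariety $\Omega_{w_0}$ is the section of $\pi:\flag V \to X$ singled out by the flag $V\unddot$ itself; its class is the top Chern class of an explicit bundle, and can be matched directly against the closed form of $\bpol^{(-\beta)}_{w_0}$. For the inductive step I would use the intermediate $\Proj^1$-fibrations $p_i:\flag V \to \flag_{(i)} V$ obtained by forgetting the $i$-th step of the tautological flag. Geometrically, whenever $\ell(ws_i) = \ell(w)-1$, the map $p_i$ restricts to a birational $\Proj^1$-bundle $\Omega_w \to p_i(\Omega_w)$, and the push--pull operator $p_{i,*}p_i^*$ sends $[\Omega_w]_{CK}$ to $[\Omega_{ws_i}]_{CK}$. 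On the polynomial side, the double $\beta$-polynomials satisfy an analogous recursion via the $K$-theoretic isobaric divided difference $\pi_i^{\beta}$, and the substitution $x_i \mapsto c_1(M_i)$, $y_j \mapsto c_1(\pi^*L_j^\vee)$ intertwines $p_{i,*}p_i^*$ with $\pi_i^{\beta}$.

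The main obstacle is checking that Anderson's operational $CK^*$ really supports all these operations in positive characteristic; in particular, that $p_{i,*}p_i^*$, applied to a polynomial in the Chern classes of the universal line bundles $M_1,\dots,M_n$, evaluates to the expected $K$-theoretic divided difference acting on the variables $c_1(M_i)$ and $c_1(M_{i+1})$. This in turn relies on the formal group law $c_1(L\otimes M) = c_1(L)\oplus c_1(M)$ being compatible with push-forward along $\Proj^1$-bundles, which is one of the features of $CK^*$ established in \cite{K-theoreticAnderson}. A minor bookkeeping point is the substitution $\beta \mapsto -\beta$ in $\bpol^{(-\beta)}$, which reconciles the paper's formal group law $x \oplus y = x + y - \beta xy$ with the sign convention of Fomin--Kirillov.
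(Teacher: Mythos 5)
Your overall strategy is the same as the paper's: establish the operational formalism for Anderson's $CK^*$ in positive characteristic (presentation of $CK^*(\flag V)$, projective bundle formula, identification of the push--pull operators along the $\Proj^1$-fibrations $\flag V\rightarrow \flag_{\hat i}V$ with the $\beta$-divided difference operators of Fomin--Kirillov), verify the case $w=w_0$ by a top Chern class computation, and then run a descending induction on the length. The foundational issues you single out as the ``main obstacle'' are indeed the ones the paper deals with first, and it does so by citation (Cai for the projective bundle formula, Hornbostel--Kiritchenko for the presentation, Anderson for the Chern class and push-forward computations), so that part of your plan is sound.

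However, there is a genuine gap in your inductive step. You assert that when $\ell(ws_i)=\ell(w)-1$ the push--pull operator (which should be $\pi_i^*\pi_{i*}$, acting on $CK^*(\flag V)$) sends $[\Omega_w]_{CK}$ to $[\Omega_{ws_i}]_{CK}$, as if this were the same elementary fact as in the Chow ring. In $CH^*$ it follows from birational proper push-forward of cycles plus flat pull-back; in $K^0$ and $CK^*$ it does not: the push-forward of the class of $\Omega_w$ along the birational contraction $\Omega_w\rightarrow \pi_i(\Omega_w)$ equals the class of the image only if the higher direct images of the structure sheaf vanish and $\pi_{i*}\mathcal{O}_{\Omega_w}=\mathcal{O}_{\pi_i(\Omega_w)}$, i.e.\ one needs a rational-singularities (rational-resolution) statement for Schubert varieties, which is a nontrivial geometric input, especially in positive characteristic where it is usually obtained via Frobenius splitting. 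Your proposal never invokes anything of this kind. The paper's proof is organized precisely to isolate this input: the iterated push--pull operators are shown, via the recursive construction and base change, to compute $\varphi_{I*}[R_I]_{CK}$ for a Bott--Samelson resolution $R_I\rightarrow\Omega_w$, and only at the very end is $\varphi_{I*}[R_I]_{CK}=[\Omega_w]_{CK}$ deduced from the fact that Schubert varieties have rational singularities, citing \cite[Remark 1.2]{K-theoreticAnderson}. To repair your argument you must either justify your push--pull identity on Schubert classes by exactly such a rationality statement, or reroute the induction through Bott--Samelson resolutions as the paper does. (A minor additional slip: for $\ell(ws_i)=\ell(w)-1$ it is $\Omega_{ws_i}=\pi_i^{-1}(\pi_i(\Omega_w))$ that is a $\Proj^1$-bundle over the image, while the restriction $\Omega_w\rightarrow\pi_i(\Omega_w)$ is only birational, not a $\Proj^1$-bundle.)
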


\begin{proof}
Although it requires some preliminary verifications, the proof is essentially an adaptation of the one for $CH^*$ given in \cite{FlagsFulton} by Fulton and later generalised to any oriented cohomology theory in \cite{ThomHudson}. First one verifies that, as a ring, $CK^*(\flag V)$ is isomorphic to $CK^*(X)[x_1,\tred,x_n]$ modulo the ideal generated by the elements $e_i(x)-c_i(V)$ with $i\in\{1,\tred,n\}$ and $e_i(x)$ being the $i$-th elementary symmetric function. As pointed out in \cite[Theorem 2.6]{SchubertHornbostel}, such isomorphism holds as long as $CK^*$ satisfies the projective bundle formula, which it does (see \cite[Theorem 6.3]{AlgebraicCai}).

The second step consists in verifying that the push-pull operators $\pi_i^*\pi_{i*}$ on $CK^*(\flag V)$ coincide with $\phi_i$, the $\beta$-divided difference operators of Fomin--Kirillov. Here $\pi_i:\flag V \rightarrow \flag_{\hat i} V$ is the projection onto the partial flag bundle in which the $i$-th flag has been forgotten. Since $\flag V\stackrel{\pi_i}\longrightarrow \flag_{\hat i} V$ can be viewed as the projective bundle of a vector bundle of rank 2, $\pi_{i*}$ is completely determined by the images of 1 and $x_i$. By making use of \cite[Appendix A.1, \S\ Chern classes (c)]{K-theoreticAnderson} one can easily check that on these elements the two operators actually coincide.  

These preliminary facts being checked, we can move on to the actual proof, which is by induction on the length of $w_0w$. First one verifies the formula for the longest element, whose associated Schubert variety is isomorphic to the base scheme $X$. Since $\Omega_{\omega_0}$ can be described as the zero scheme of a regular section of a bundle, in view of \cite[Appendix A.2, \S\ Chern classes]{K-theoreticAnderson} the fundamental class $[\Omega_{w_0}]_{CK}$ is given by the top Chern class of the bundle in question. More precisely, one has
$$[\Omega_{w_0}]_{CK}=\prod_{i+j\geq n} c_1(M_i\otimes \pi^*(L_j^\vee))=\prod_{i+j\geq n} c_1(M_i)\oplus c_1(\pi^*(L_j^\vee))=\bpol^{(-\beta)}_{w_0}\Big(c_1(M_i), c_1\big(\pi^*(L_j^\vee)\big)\Big).$$

Finally, for the inductive step one considers a minimal decomposition of $w_0w$ into elementary transpositions $s_{i_1}s_{i_2}\trecd s_{i_l}$ to which we associate the $l$-tuple $I=(i_1,\dots,i_l)$. Recall that every such tuple gives rise to $R_I$, a desingularisation of $\Omega_w$ known as Bott--Samelson resolution. In view of the recursive construction of $R_I\stackrel{\varphi_I}\longrightarrow \flag V$ (see \cite[Appendix C]{FultonPragacz} for details) and of the known compatibilities of pushforward and pullback maps for trasverse morphisms, we have  
\begin{eqnarray*}
\varphi_{I*}[R_I]_{CK}&=&\pi_{i_l}^*\pi_{i_l*}\cdots \pi_{i_1}^*\pi_{i_1*}[\Omega_{w_0}]_{CK}=\phi_{i_l} \cdots \phi_{i_1}\bpol^{(-\beta)}_{w_0}\Big(c_1(M_i), c_1\big(\pi^*(L_j^\vee)\big)\Big)\\
&=&\bpol^{(-\beta)}_{w}\Big(c_1(M_i), c_1\big(\pi^*(L_j^\vee)\big)\Big),
\end{eqnarray*}
where the last step follows from the inductive definition of double $\beta$-polynomials. To finish the proof one observes that, since Schubert varieties have rational singularities, the left hand side of the preceeding equation actually coincides with $[\Omega_w]_{CK}$ (see \cite[Remark 1.2]{K-theoreticAnderson}).
\end{proof}

\section{Main result}

\begin{theorem}\label{thm DL}
Let $Fl_n$ be the variety of full flags contained in $\Aff_K^n$ and $Q\unddot=(Q_{n-1},\dots,Q_1)$ its universal flag of quotient bundles. For $i\in \{1,\dots,n\}$ set $M_i:=\text{Ker}(Q_i\srarrow Q_{i-1})$, where $Q_n=\Aff_K^n$ and $Q_0=0$. 
Then, for every $w\in S_n$ we have that, as an element of $CK^*(Fl_n)$, the fundamental class of the closure of the Deligne-Lusztig variety $X(w)$ is given by 
$$\left[\overline{X(w)}\right]_{CK}=\bpol^{(-\beta)}_{ww_0}\left(q\odot c_1(M_i),\ominus c_1(M_{n+1-j})\right)$$
for $i,j\in\{1,\tred,n\}$. Here $\bpol^{(-\beta)}_{ww_0}$ stands for the double $\beta$-polynomial associated to $ww_0$, while 
$$q\odot c_1(M_i)=\sum_{j=1}^q \binom{q}{j} c_1(M_i)^j (-\beta)^{j-1} 
\text{\  and\  }\ominus c_1(M_{n+1-j})=-\frac{c_1(M_{n+1-j})}{1-\beta c_1(M_{n+1-j})}.$$
\end{theorem}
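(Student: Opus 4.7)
The plan is to realise $\overline{X(w)}$ as the pullback of a Schubert variety in $Fl_n\times Fl_n$ along the graph of relative Frobenius, and then to evaluate that pullback using Proposition \ref{prop Schubert} together with the two preparatory lemmas.

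First I would translate the problem into the language of degeneracy loci. By definition $X(w)=\Gamma_{F_{rel}}^{-1}(O(w))$, so $\overline{X(w)}=\Gamma_{F_{rel}}^{-1}(\overline{O(w)})$. Taking $V=\underline{\Aff_K^n}$ with the tautological flag $U\unddot$ as in Example \ref{ex Orbits}, one has $\flag V= Fl_n\times Fl_n\stackrel{pr_1}{\to}Fl_n$, the reference flag is $pr_1^* U\unddot$, and $\overline{O(w)}=\Omega_{ww_0}$ is a Schubert variety in this flag bundle. Hence the task reduces to computing $\Gamma_{F_{rel}}^*[\Omega_{ww_0}]_{CK}$ and translating the result into classes on $Fl_n$.

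Next, I would verify that $\Gamma_{F_{rel}}$ meets $\Omega_{ww_0}$ transversely in the sense needed for the identity $[\overline{X(w)}]_{CK}=\Gamma_{F_{rel}}^*[\Omega_{ww_0}]_{CK}$ to hold. The essential point is that $F_{rel}$ has vanishing differential, so at $(x,F_{rel}(x))$ the image of $d\Gamma_{F_{rel}}$ is $T_x Fl_n\times 0$; on the other hand, the open cell $O(w)$ is a $GL_n$-orbit, hence smooth, and the projection $pr_2\colon O(w)\to Fl_n$ is smooth and surjective, so $T_{(x,F_{rel}(x))}O(w)$ surjects onto the second factor. These two facts together give transversality on $O(w)$ and force $X(w)$ to have the expected dimension $l(w)$. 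Since $\Omega_{ww_0}$ has rational singularities, the closure $\overline{X(w)}=\Gamma_{F_{rel}}^{-1}(\Omega_{ww_0})$ coincides with the refined pullback and contributes no excess class, yielding the desired identity in $CK^*(Fl_n)$.

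With this in hand, Proposition \ref{prop Schubert} applied to the flag bundle $pr_1\colon Fl_n\times Fl_n\to Fl_n$ gives
$$[\Omega_{ww_0}]_{CK}=\bpol^{(-\beta)}_{ww_0}\bigl(c_1(\mathcal{M}_i),\,c_1(pr_1^*L_j^\vee)\bigr),$$
where $L_j=U_j/U_{j-1}$ on $Fl_n$ and $\mathcal{M}_i$ is the $i$-th kernel bundle for the universal quotient flag on $\flag V$, i.e.\ $\mathcal{M}_i=pr_2^*M_i$. Since $pr_1\circ\Gamma_{F_{rel}}=id_{Fl_n}$ and $pr_2\circ\Gamma_{F_{rel}}=F_{rel}$, pulling back yields $\Gamma_{F_{rel}}^*\mathcal{M}_i=F_{rel}^*M_i\simeq M_i^{\otimes q}$ by Lemma \ref{lem qpow}, whose first Chern class is $q\odot c_1(M_i)$ by Lemma \ref{lem multiplication}; and $\Gamma_{F_{rel}}^*(pr_1^*L_j^\vee)=L_j^\vee$. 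A short check on the short exact sequences $0\to U_{n-i}\to U_{n-i+1}\to M_i\to 0$ identifies $L_j=M_{n+1-j}$, and the formal group law relation $c_1(L)\oplus c_1(L^\vee)=0$ gives $c_1(L_j^\vee)=\ominus c_1(M_{n+1-j})$. Substituting produces exactly the claimed formula.

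The main obstacle I expect is the transversality/reducedness step: one must ensure that the singular locus of $\Omega_{ww_0}$ does not contribute any excess intersection to $\Gamma_{F_{rel}}^*[\Omega_{ww_0}]_{CK}$. The key input is the combination of the zero-differential property of Frobenius (providing transversality on the open cell and the correct dimension on the closure) with the rational singularities of Schubert varieties used already in Proposition \ref{prop Schubert}; the remaining work, namely translating Chern classes through Lemmas \ref{lem qpow} and \ref{lem multiplication}, is essentially formal.
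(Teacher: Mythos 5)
Your proposal is correct and follows essentially the same route as the paper: realise $\overline{X(w)}$ as $\Gamma_{F_{rel}}^{-1}(\overline{O(w)})$, identify $\overline{O(w)}$ with the Schubert variety $\Omega_{ww_0}$ of the flag bundle $pr_1\colon Fl_n\times Fl_n\to Fl_n$, apply Proposition \ref{prop Schubert}, and pull back the Chern classes via Lemmas \ref{lem qpow} and \ref{lem multiplication} together with the identifications $U_j/U_{j-1}=M_{n+1-j}$ and $c_1(L^\vee)=\ominus c_1(L)$. Your explicit transversality argument (vanishing differential of Frobenius plus smoothness of the orbit projection) only fleshes out the codimension/pullback step that the paper treats more briefly, so it is a welcome elaboration rather than a different approach.
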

\begin{proof}
Let us begin by recalling that the $\overline{X(w)}$ coincides with the union of all the $X(v)$ for $v\leq w$ in the Bruhat order and that the same holds for $\overline{O(w)}$ as well. As a consequence one has that $\Gamma_{F_{rel}}^{-1}(\overline{O(w)})=\overline{X(w)}$ and, since both varieties have the same codimension in the respective ambient spaces, the fundamental class of $\overline{X(w)}$ can be computed as the pullback $\Gamma^*_{F_{rel}}[\overline{O(w)}]$.
Now we want to interpret $\Gamma_{F_{rel}}:Fl_n\rightarrow Fl_n\times Fl_n$ using the universal property of $Fl_n\times Fl_n$ viewed as the flag bundle $\flag \underline{\mathbb{A}_K^n}$, as explained in Example \ref{ex Orbits}. With our conventions it parametrises the full flags of quotient bundles of the trivial bundle $\underline{\mathbb{A}_K^n}$. It is easy to see that $\Gamma_{F_{rel}}$ corresponds precisely to the full flag $F_{rel}^*Q_\bullet$ or, in other words, that with the notations of Example \ref{ex Orbits} one has 
$\Gamma^*_{F_{rel}}\mathcal{Q_\bullet}=F^*_{rel} Q_\bullet$. To summarise, we have the following chain of equalities  
\begin{eqnarray*}
\left[\overline{X(w)}\right]_{CK}&=&\Gamma^*_{F_{rel}}\left[\overline{O(w)}\right]_{CK}=\Gamma_{F_{rel}}^*[\Omega_{ww_0}(\underline{\Aff}_K^n)]_{CK}
=\Gamma_{F_{rel}}^*\bpol^{(-\beta)}_{ww_0}\Big(c_1(M_i'), c_1\big(pr_1^*(U_j/U_{j-1})^\vee\big)\Big)\\
&=&\bpol^{(-\beta)}_{ww_0}\Big(c_1(\Gamma_{F_{rel}}^*M_i')\big), c_1\big(\Gamma_{F_{rel}}^*pr_1^*(U_j/U_{j-1})^\vee\big)\Big)
=\bpol^{(-\beta)}_{ww_0}\Big(c_1\big(F_{rel}^*M_i\big), c_1\big((U_j/U_{j-1})^\vee\big)\Big),
\end{eqnarray*}
where the second step uses Proposition \ref{prop Schubert} and each $M'_i$ is the line bundle $\textit{Ker}(\mathcal{Q}_i\srarrow \mathcal{Q}_{i-1})$ arising from the universal flag $\mathcal Q_\bullet$ over $Fl_n\times Fl_n$. To finish the proof it now suffices to make use of Lemma \ref{lem qpow} and Lemma \ref{lem multiplication}.
\begin{eqnarray}\label{eq last}
\left[\overline{X(w)}\right]_{CK}=\bpol^{(-\beta)}_{ww_0}\Big(c_1(M_i^{\otimes q}), c_1(M_{n+1-j}^\vee)\Big)
=\bpol^{(-\beta)}_{ww_0}\Big(q\odot c_1(M_i),\ominus c_1(M_{n+1-j})\Big). \qedhere
\end{eqnarray}

\end{proof}

By specialising our formula to the Chow ring and to the Grothendieck ring, we obtain the following corollary, the first formula of which recovers the first case of \cite[Proposition 6.2]{HomologyKim}.

\vspace{-0.1 cm}

\begin{corollary}\label{cor main}
Under the hypothesis of Theorem \ref{thm DL}, we have the following formulas, respectively describing the Chow ring fundamental class $\left[\overline{X(w)}\right]_{CH}$ and the class of the structure sheaf $\mathcal{O}_{\overline{X(w)}}$:
$$i)\ \left[\overline{X(w)}\right]_{CH}=\spol_{ww_0}\Big(q\cdot c_1(M_i),c_1(M_{n+1-j})\Big) ; \ ii)\ 
\left[\mathcal{O}_{\overline{X(w)}}\right]_{K^0}=\gpol_{ww_0}\Big(1-[M_i^\vee]^q,1-[M_{n+1-j}]\Big).$$
\end{corollary}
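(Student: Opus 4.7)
The plan is to derive both formulas directly from Theorem \ref{thm DL} by applying the functorial isomorphisms $CK^*(X)/(\beta)\simeq CH^*(X)$ and $CK^*(X)[\beta^{-1}]\simeq K^0(X)\otimes_\ZZ\ZZ[\beta,\beta^{-1}]$ recalled in Section 3. Under these, the double $\beta$-polynomial $\bpol^{(-\beta)}_{w}$ of Fomin--Kirillov interpolates between the double Schubert polynomial $\spol_{w}$ at $\beta=0$ and the double Grothendieck polynomial $\gpol_{w}$ at $\beta=1$, so the only content of the corollary is the verification that the two arguments of $\bpol^{(-\beta)}_{ww_0}$ on the right-hand side of Theorem \ref{thm DL} transform correctly under each specialization.

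For \textit{i)}, I would set $\beta=0$. By Lemma \ref{lem multiplication}, only the linear summand of $q\odot c_1(M_i)=\sum_{j=1}^q\binom{q}{j}c_1(M_i)^j(-\beta)^{j-1}$ survives, giving $q\cdot c_1(M_i)$. At the same time, $\ominus c_1(M_{n+1-j})=-c_1(M_{n+1-j})/(1-\beta c_1(M_{n+1-j}))$ collapses to $-c_1(M_{n+1-j})=c_1(M_{n+1-j}^\vee)$, which is the convention absorbed into the standard identification $\bpol^{(0)}_{w}\equiv\spol_{w}$. This produces formula \textit{i)} and, in particular, recovers Kim's result.

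For \textit{ii)}, I would set $\beta=1$. The key identification is the standard one in $K^0$, namely $c_1(L)=1-[L^\vee]$, which is compatible with the multiplicative formal group law $u\oplus v=u+v-uv$ at $\beta=1$. Under this, Lemma \ref{lem multiplication} telescopes to $q\odot c_1(M_i)=1-(1-c_1(M_i))^q=1-[M_i^\vee]^q$. For the second argument, $\ominus c_1(M_{n+1-j})$ is by construction the formal inverse of $c_1(M_{n+1-j})$, and since $c_1(L)\oplus c_1(L^\vee)=c_1(L\otimes L^\vee)=0$ one has $\ominus c_1(M_{n+1-j})=c_1(M_{n+1-j}^\vee)=1-[M_{n+1-j}]$. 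Combined with the definitional coincidence $\bpol^{(-1)}_{w}=\gpol_{w}$, this yields formula \textit{ii)}.

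The argument is essentially a routine bookkeeping exercise carried out on top of Theorem \ref{thm DL}, with no genuine obstacle; the only delicate point is to keep the sign and normalisation conventions consistent between the Fomin--Kirillov polynomials, the classical Schubert and Grothendieck polynomials of Lascoux--Sch\"utzenberger, and the identification $c_1(L)=1-[L^\vee]$ in $K^0$. Since those conventions are already fixed in Section 3, the specializations can be read off directly from the two lemmas used at the end of the proof of Theorem \ref{thm DL}.
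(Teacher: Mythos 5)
Your proposal is correct and takes essentially the same route as the paper: specialize $\beta=0$ and $\beta=1$ in Theorem \ref{thm DL}, using $\bpol^{(0)}_{w}(\mathbf{x},\mathbf{y})=\spol_{w}(\mathbf{x},-\mathbf{y})$, $\bpol^{(-1)}_{w}=\gpol_{w}$ and the identity $c_1(L)=1-[L^\vee]$ in $K^0$. The only cosmetic difference is that for \textit{ii)} the paper sets $\beta=1$ in the intermediate expression $\bpol^{(-\beta)}_{ww_0}\big(c_1(M_i^{\otimes q}),c_1(M_{n+1-j}^\vee)\big)$ from the proof of the theorem, whereas you re-telescope $q\odot c_1(M_i)$ and $\ominus c_1(M_{n+1-j})$ directly at $\beta=1$; the two verifications are equivalent.
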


\begin{proof}
The statement for $CH^*$ follows directly from that of $CK^*$ by setting $\beta=0$ and recalling that $\bpol^{(0)}_{w}(\mathbf{x},\mathbf{y})=\mathfrak{S}_w(\mathbf{x},-\mathbf{y})$. To obtain the second formula we consider the middle equation of (\ref{eq last}) and set $\beta=1$. Since $\bpol^{(-1)}_{w}(\mathbf{x},\mathbf{y})=\mathfrak{G}_w(\mathbf{x},\mathbf{y})$, one gets
$$\left[\mathcal{O}_{\overline{X(w)}}\right]_{K^0}=\gpol_{ww_0}\Big(c_1(M_i^{\otimes q}), c_1(M_{n+1-j}^\vee)\Big)$$
and the statement then follows from the well known fact that in the Grothendieck ring of vector bundles one has $c_1(L)=1-[L^\vee]$ for all line bundles. 
\end{proof}
\phantom{a}
\vspace{-0.9 cm}

\begin{remark}
The sign mismatch between the corresponding formulas of Corollary \ref{cor main} and \cite[Proposition 6.2]{HomologyKim} is due to different conventions for the generators of $CH^*(Fl_n)$ which ultimately arise from dual constructions of $Fl_n$. The two formulas coincide provided one makes the change of variables $x_i \mapsto -x_{n+1-i}$.  
\end{remark}

\vspace{-1.1 cm}
\phantom{a}
\bibliographystyle{acm}
\bibliography{references}

\begin{small}
{\scshape
\noindent Thomas Hudson, Fachgruppe Mathematik
und Informatik, Bergische Universit\"{a}t Wuppertal, Gau{\ss}strasse 20, 42119 Wuppertal, Germany
}
\end{small}

{\textit{email address}: \tt{hudson@math.uni-wuppertal.de}}

\

\begin{small}
{\scshape
\noindent Dennis Peters, Fachgruppe Mathematik
und Informatik, Bergische Universit\"{a}t Wuppertal, Gau{\ss}strasse 20, 42119 Wuppertal, Germany
}
\end{small}

{\textit{email address}: \tt{dpeters@math.uni-wuppertal.de}}
\end{document}